\theoremstyle{plain}
\newtheorem{thm}{Theorem} 
\newtheorem{prop}[thm]{Proposition}
\theoremstyle{definition}
\newtheorem{rem}[thm]{Remark}
\def\Ker{\mathop{\mathrm{Ker}}\nolimits}
\newcommand{\lra}{\longrightarrow}
\newcommand{\ra}{\rightarrow}
\newcommand{\Z}{{\Bbb Z}}
\newcommand{\As}{{\rm As }}
\newcommand{\D}{{\cal D}_{g}}
\newcommand{\Dd}{{\cal D}_{2}}
\newcommand{\DD}{{{\cal D}^{\rm ns}_{g}}}
\newcommand{\DDd}{{{\cal D}^{\rm ns}_{2}}}
\newcommand{\M}{{\cal M}}
\newcommand{\pc}[2]{\mbox{$\begin{array}{c}
\includegraphics[scale=#2]{#1.eps}
\end{array}$}}
\title{ Finite presentations of centrally extended mapping class groups}
\author{Takefumi Nosaka}
\begin{document}
\maketitle
\begin{abstract} \noindent
We describe a finite presentation of $ \mathcal{T}_{g,r} $ for $g \geq 3$. 
Here $\mathcal{T}_{g,r} $ is the universal central extension of the mapping class group
of the surface of genus $g$ with $r$-boundaries.
We also investigate the cases of $g=2,3$, and give an application. 
\end{abstract}

\begin{center}
\normalsize
{\bf Keywords}
\ \ \ mapping class group, central extension \ \ \
\end{center} 

\begin{center}
\normalsize
{\bf 
MSC2010 database}
\ \ \ 16S20, \ 37E30, \  	57M07 \ \
\end{center}

\large
\baselineskip=16pt
\section{Introduction and main results}\label{s1}

The mapping class group $\M_{g,r} $
of the surface $\Sigma_{g,r}$ of genus $g$ with $r$-boundaries
has been heavily studied in low-dimensional topology.
Especially, in quantum topology, Witten \cite{Wi} has made
a prophetic discovery that
the Chern-Simons quantum field theory of level $k$ (with Wilson loops)
produces 3-manifold invariants and (quantum) representations $V_k$ of $\mathcal{M}_{g,r} $ with an ``anomaly";
this prophecy has since been mathematically formulated in some cases (see, e.g., \cite{BHMV,GM,Koh}) and
the anomaly has been interpreted as a 2-framing \cite{Ati} or a $p_1$-structure \cite{BHMV}.
Because of the obstacles from by $p_1$-structures,
the space $V_k$ is not always 
some right module of $\M_{g,r}$, 
but that of 
a central extension over $\M_{g,r}$.

Such central extensions are sometimes considered to be complicated in a sense. In contrast,
the subjacent groups $\M_{g,r}$ have been widely analysed with
finite presentations (see, e.g., \cite{FM,Ge2}).
To see this, since $\M_{g,r} $ with $g \geq 3$ is perfect (see \cite{Kor}),
we can set up the central extension
\begin{equation}\label{tyuusing2} 0 \lra \Z \lra \mathcal{T}_{g,r} \xrightarrow{\ \mathrm{proj.}\ } \M_{g,r} \lra 0 \ \ \ \ \ \ \ {\rm (exact) }, \end{equation}
associated with the group {\it co}homology $H^2(\M_{g,r} ;\Z) \cong \Z$, which can be computed in a combinatorial way \cite[\S 6]{Kor}.
A 2-cocycle $\tau_{g}$ corresponding to a generator of the center $\Z$ has some difficulties, as mentioned in \cite{Ati}; 
the results of Meyer \cite{Meyer} and Turaev \cite{Turaev} indicate that
the quadruple $4 \tau_{g} $ can be geometrically described as a signature from the modular group $Sp(2g;\Z)$;
however, known formulations of $ \tau_{g}$ are algebraically a little complicated.
For example, Turaev formula of $ \tau_{g}$ \cite{Turaev} is
a signature 2-cocycle with a modification using a Maslov index;
furthermore, 
in \cite{GM},
the structure of $\mathcal{T}_{g,r} $ was described in terms of Lagrangian cobordisms. 
\subsection{Results; The cases of $g \geq 3$ and $r =0 , \ 1 .$}\label{ss1}
This paper explicitly describes a finite presentation of $\mathcal{T}_{g,r} $ for $g \geq 3, \ r \geq 0$.
To begin, the presentations 
with $r =0, \ 1$ are as follows: 
\begin{thm}\label{thm1} (I) Let $g \geq 3$. 
The central extension $ \mathcal{T}_{g,1} $ of $ \M_{g,1}$ in \eqref{tyuusing2} has a presentation with generators $c_0, \ c_1, \dots, c_{2g+1}, \mu$ and
the following relations: 
\begin{equation}\label{cet1} {\rm (Braid \ relation)}\ \ \ \ \ \ \ \ \ c_i c_j = c_j c_i, \ \ \ {\rm if}\ \ \ I(c_i,c_j)=0, \ \end{equation}
\begin{equation}\label{cet2} \ \ \ \ \ \ \ \ \ \ \ \ \ \ \ \ \ \ \ \ \ \ \ \ \ \ \ \ c_i c_j c_i= c_j c_ic_j , \ \ \ {\rm if}\ \ \ I(c_i,c_j)=1, \ \end{equation}
\begin{equation}\label{cet4} {\rm (3}\textrm{-}{\rm chain \ relation).}\ \ \ \ \ (c_1 c_2 c_3)^4 ( c_0 b_0)^{-1} = \mu, \ \ \ \ \ c_i \mu= \mu c_i, \end{equation}
\begin{equation}\label{cet5} {\rm (Lantern \ relation).}\ \ \ \ \ c_0 b_2 b_1 ( c_1 c_3 c_5 b_3)^{-1} =1, \end{equation}
where $I$ means the geometric intersection number in Figure \ref{tg261},
and the notation $b_0,b_1,b_2,b_3$ is common ones in the Wajnryb's presentation \cite{Waj}: 
Precisely, we have
\[ b_0 =(c_4 c_3 c_2 c_1 c_1 c_2 c_3 c_4)^{-1} c_0 (c_4 c_3 c_2 c_1 c_1 c_2 c_3 c_4),\]
\[ b_1 =(c_4 c_5 c_3 c_4 )^{-1} c_0 (c_4 c_5 c_3 c_4 ), \ \ \ \ \ \ b_2 =(c_2 c_3 c_1 c_2 )^{-1} b_1 (c_2 c_3 c_1 c_2 ),\]
\[ b_3 =(c_6 c_5 c_4 c_3 c_2 c_5^{-1}c_6^{-1} b_1 c_6 c_5 c_1^{-1} c_2^{-1}c_3^{-1}c_4^{-1})^{-1} c_0 (c_6 c_5 c_4 c_3 c_2 c_5^{-1}c_6^{-1} b_1 c_6 c_5 c_1^{-1} c_2^{-1}c_3^{-1}c_4^{-1}).\]

\noindent
(II)
Furthermore, concerning the closed surface of genus $g \geq 3$,
the group $ \mathcal{T}_{g,0} $ can be 
presented as above with adding the following commutator relation:
\begin{equation}\label{cet25} [(c_{2g} c_{2g-1} \cdots c_1 c_1 c_2 \cdots c_{2g} ), c_{2g+1}] =1, 
\end{equation}
where the bracket $[a,b]$ is the abbereviation of $aba^{-1}b^{-1}.$
\end{thm}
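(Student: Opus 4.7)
The strategy is to lift a known finite presentation of $\mathcal{M}_{g,r}$ to its universal central extension. Concretely, starting from Wajnryb's presentation of $\mathcal{M}_{g,1}$ (Humphries generators with braid, $3$-chain, and lantern relations, as in \cite[Theorem 5.3]{FM}), I would invoke the following general principle: for any perfect group $G=\langle X \mid R\rangle$ with $H_2(G;\mathbb{Z})\cong \mathbb{Z}$, the universal central extension $\tilde G$ admits the presentation
\[
\tilde G \;\cong\; \bigl\langle X,\mu \;\big|\; [\mu,x]=1 \ (x\in X), \ \tilde r = \mu^{n_r} \ (r\in R) \bigr\rangle,
\]
where $\tilde r$ denotes a free-group lift of the word $r$ and $(n_r)_{r\in R}\in \mathbb{Z}^R$ represents a generator of $H_2(G;\mathbb{Z})$ via Hopf's formula $(R\cap [F,F])/[F,R]$. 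The central generator $\mu$ is the element denoted $u$ in the statement.

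The heart of the proof is to compute $n_r$ for each Wajnryb relation. For any braid relation of type (\ref{cet1}) or (\ref{cet2}), the two Dehn twists are supported on a subsurface of small complexity (an annulus or a once-punctured torus) whose mapping class group has vanishing Schur multiplier, so the relation lifts exactly and $n_r=0$. The same ``subsurface principle'' applies to the lantern relation (\ref{cet5}), which is supported on a four-holed sphere. For the $3$-chain relation I would evaluate an explicit $2$-cocycle---most naturally Meyer's signature cocycle pulled back along the symplectic representation $\mathcal{M}_{g,1}\to Sp(2g;\mathbb{Z})$, or Turaev's cocycle directly---on the word $(c_1c_2c_3)^4(c_0b_0)^{-1}$. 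The cocycle class can be normalised so that this evaluation equals $+1$, fixing $\mu$ as a generator of the central $\mathbb{Z}$ and yielding (\ref{cet4}). Completeness then follows because the resulting finitely presented group surjects onto $\mathcal{M}_{g,1}$ with central kernel $\langle\mu\rangle\cong \mathbb{Z}$, which by universality must coincide with $\mathcal{T}_{g,1}$.

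For part (II), I would analyse the capping exact sequence $1\to \langle t_\partial\rangle \to \mathcal{M}_{g,1}\to \mathcal{M}_{g}^{\ast}\to 1$ followed by the Birman sequence $1\to \pi_1(\Sigma_g)\to \mathcal{M}_g^{\ast}\to \mathcal{M}_{g,0}\to 1$, identifying $t_\partial$ with the chain word $c_{2g}c_{2g-1}\cdots c_1 c_1 c_2\cdots c_{2g}$ via the chain relation. Introducing the extra Humphries generator $c_{2g+1}$ for the closed surface, the only additional relation needed in the central extension is the commutation of $c_{2g+1}$ with this chain word, which is exactly (\ref{cet25}); the twist $t_\partial$ itself is not killed but becomes a power of the already-central $\mu$, consistent with $H_2(\mathcal{M}_{g,0};\mathbb{Z})\cong\mathbb{Z}$.

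The main obstacle I foresee is pinning down the exponent $n_r=\pm 1$ for the $3$-chain relation. Non-vanishing is relatively easy (the $3$-chain relation already fails to lift through the symplectic representation into $\widetilde{Sp}(2g;\mathbb{Z})$), but verifying the exact value requires either a direct signature computation on a genus-$2$ subsurface bundle or a careful comparison with a known generator of $H_2(\mathcal{M}_{g,1};\mathbb{Z})$. The subsurface principle used for the braid and lantern relations is well-known folklore, but needs to be cited or proved uniformly across all braid relations and the lantern. Everything else---tracking the centrality of $\mu$, the bookkeeping of the words $b_0,b_1,b_2,b_3$, and the capping/Birman analysis for the closed case---is essentially mechanical once the cocycle evaluation has been settled.
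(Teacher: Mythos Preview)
Your approach is correct in outline but genuinely different from the paper's. You work directly with Hopf's formula, proposing to determine the exponents $n_r$ via a subsurface principle (for the braid and lantern relations) plus an explicit cocycle evaluation (for the $3$-chain), proving (I) first and then capping off for (II). The paper instead routes everything through the associated group $\As(\DD)$ of Masbaum--Roberts: it first shows $\As(\DD)\cong\mathcal{T}_g\times\Z$ (Theorem~\ref{thm3}), then imports the Endo--Nagami signature homomorphism $I_g$ on $\Ker(\mathcal{E})$ (Theorem~\ref{ENresult}), which supplies the exact values $I_g(\kappa_{3\text{-chain}})=-6$ and $I_g(\kappa_{\rm lantern})=1$ as a black box; from these one reads off that $\mu=\kappa_{3\text{-chain}}\kappa_{\rm lantern}^{10}$ generates the central $\Z$, and an explicit homomorphism $\mathcal{G}_{g,0}\to\As(\DD)$, $c_i\mapsto e_{\gamma_i}\kappa_{\rm lantern}$, finishes (II). Part (I) is then deduced from (II) via Harer--Ivanov stability of $H^2$ along the capping map $\theta_1:\M_{g,1}\to\M_g$, reversing your order. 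The paper's route thus sidesteps precisely the cocycle evaluation you flag as the main obstacle---nothing is computed, only quoted from \cite{EN}---at the price of the auxiliary structure theorem for $\As(\D)$; your route is more self-contained but leaves that evaluation to be carried out. One small caution on your subsurface argument: for commuting twists along disjoint curves the supporting subsurface has mapping class group $\Z^2$ with $H_2(\Z^2)\cong\Z$, so $n_r=0$ there needs a bit more than the vanishing-Schur-multiplier principle you invoke for the other cases.
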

This presentation is a lift of
the Wajnryb presentation of $ \mathcal{M}_{g, r}$ \cite{Waj}.
To be precise,
his presentation can be made exactly as the quotient of the above presentation by adding the relation $\mu =1$; see also \cite[Theorem 5.3]{FM} for the detail.
Correspondingly, the symbols $c_i$ and $b_i$ in $ \mathcal{M}_{g, r} $
can be interpreted as Dehn twists along the respective curves
$\gamma_i $ and $\beta_i$ in Figure \ref{tg261}.



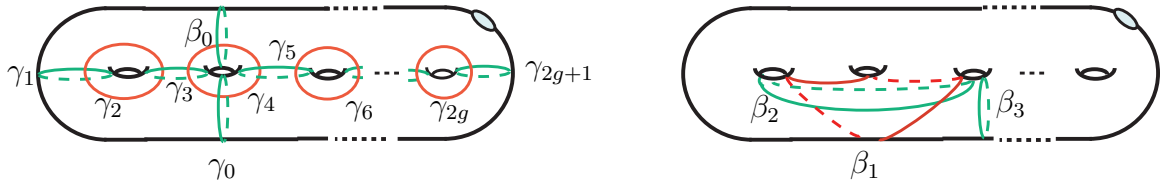
\begin{figure}[h]
$$
\begin{picture}(220,71)
\put(-112,34){\pc{L.generator14}{0.36644466755}}

\put(132,34){\pc{L.generator17}{0.36644466755}}

\put(-118,38){\large $\gamma_1$}
\put(-85,26){\large $\gamma_2$}
\put(-58,32){\large $\gamma_3$}
\put(-27,28){\large $\gamma_4$}
\put(10,25){\large $\gamma_{6}$}

\put(-19,48){\large $\gamma_5$}
\put(-51,53){\large $\beta_0$}
\put(-42,3){\large $\gamma_0 $}
\put(42,25){\large $\gamma_{2g}$}
\put(78,40){\large $\gamma_{2g+1} $}

\put(201,3){\large $\beta_1 $}
\put(163,24){\large$\beta_2$}
\put(256,26){\large $\beta_3$}

\end{picture}
$$

\vskip -1.225pc
\caption{\label{tg261} Generators of $\M_{g, *}$ with $g \geq 2$, $r\leq 1$, and the curves in the lantern relation with $g \geq 3$.}
\end{figure}

\vskip -2.88225pc

\






\subsection{Result II; Punctured cases with $g \geq 3$, and the genus two case.}\label{ss12}

Furthermore, we will examine the case $g \geq 3, \ r \geq 1$.
Following \cite{Ge2}, we call a triple $(i,j,k) \in \{1, \dots, 2g +r-2 \}^3 $ {\it good},
if it satisfies $ i\leq j \leq k $, or $j \leq k \leq i $ or $k \leq i \leq j$ without $i=j=k.$
Considering the closed curves in $\Sigma_{g,r}$ depicted in Figure \ref{gg},
let us state the expression using good indices: 
\begin{thm}[{cf. \cite[Theorem 1]{Ge2}}]\label{thm4} Let $g \geq 3$ and $r \geq 1$. 
The central extension $ \mathcal{T}_{g,r}$ admits a presentation
with generators $b, b_1, \dots, b_{g-1}, \ a_1,\dots, a_{2g+r -2}, \ \{ c_{i,j}\}_{ 1 \leq i,j \leq 2g+r-2, \ i \neq j} $
and $\mu$. Here, 
the relations are as follows:
\noindent
\begin{enumerate}[(i)]
\item ``Handles": $c_{2i, 2i \mp 1}=c_{2i \pm 1, 2i} $ for all $i$ with $ 1 \leq i \leq g-1$. 
\item ``Braids": $xy (xy^{-1})^{ I(x,y)} =yx$ holds for all $x,y$ among the generators with $I(x,y) \leq 1$, where $I(x,y)$ is the geometric intersection number of $x,y$ according to Figure \ref{gg}.
\item ``Stars": $ c_{i,j} c_{j,k} c_{k,i} = (a_i a_j a_k b)^3 \mu^{-1}$ for all good triples $ (i,j,k) $.
Here, we set $c_{\ell, \ell}=1$.
\item ``Centralization": $ [b, \mu]=[b_i, \mu]=[a_i, \mu]=[c_{i,j}, \mu]=1$.
\end{enumerate}
\end{thm}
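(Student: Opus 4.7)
\smallskip\noindent
\textbf{Proof plan.}
The strategy is to deduce Theorem~\ref{thm4} from Gervais' presentation of $\M_{g,r}$ \cite[Thm.~1]{Ge2} by an explicit central lift. The guiding principle is Hopf's formula: for a perfect group $G=\langle S\mid R\rangle$ with $H_{2}(G;\Z)\cong\Z$, if $F$ is the free group on $S$, then $R\cap[F,F]/[F,R]\cong\Z$ and the universal central extension $\widetilde G$ admits the presentation
\[
\widetilde G=\bigl\langle S,\mu\;\bigm|\; [s,\mu]=1\;(s\in S),\ \ r=\mu^{n_{r}}\;(r\in R)\bigr\rangle,
\]
where $n_{r}$ is the image of $r$ under that isomorphism. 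Applied to Gervais' presentation, Theorem~\ref{thm4} reduces to checking that $n_{r}=0$ for every handle and braid relator and $n_{r}=1$ for every star relator; the centralization clauses~(iv) are then automatic.

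\smallskip\noindent
For the base case $r=1$ I would work against Theorem~\ref{thm1}. Each Gervais generator $b,b_{i},a_{i},c_{i,j}$ has an explicit expression as a word in the chain generators $c_{0},\dots,c_{2g+1}$ (the same words that Gervais uses downstairs in $\M_{g,1}$); lifting these words verbatim to $\mathcal{T}_{g,1}$ furnishes canonical lifts. The braid and handle relations~(i)--(ii) then reduce to commutation identities provable from~(\ref{cet1})--(\ref{cet2}) by conjugation, so no central correction arises. Relations~(iv) come from~(\ref{cet4}). The star relation~(iii) is the only place where the lantern~(\ref{cet5}) and the 3-chain~(\ref{cet4}) combine nontrivially, producing exactly the factor $\mu^{-1}$. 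For $r\geq 2$ I would induct on $r$ using the Birman-type exact sequence $1\to\pi_{1}(\Sigma_{g,r})\to\M_{g,r+1}\to\M_{g,r}\to 1$ and its lift to universal central extensions; since $\pi_{1}(\Sigma_{g,r})$ is free for $r\geq 1$ its $H_{2}$ vanishes, so adjoining one new Dehn-twist generator $a_{2g+r-1}$ together with its handle/braid/star relations completes the step by the same word calculus as in the base case.

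\smallskip\noindent
The main obstacle will be pinning down the normalisation $n_{r}=1$ (as opposed to some other integer) for the star relators in the base case. Concretely, one must evaluate the signature 2-cocycle $\tau_{g}$ of Meyer--Turaev on $c_{i,j}c_{j,k}c_{k,i}(a_{i}a_{j}a_{k}b)^{-3}$ and check the result is a generator of $\Z$. A direct route is a Maslov-index computation using the Lagrangian-cobordism description of \cite{GM}; alternatively one calibrates against the 3-chain relator~(\ref{cet4}), whose $\mu$-coefficient is normalised to $1$ in Theorem~\ref{thm1}, and extracts the star coefficient by Tietze manipulation. Once this single calibration is fixed, the remaining star relators receive the same coefficient by mapping-class-group symmetry, and the rest of the argument is bookkeeping.
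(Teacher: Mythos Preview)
Your Hopf-formula framing is correct and is essentially the skeleton the paper also uses: show that $\mathcal{G}_{g,r}'$ (the group on the stated presentation) is a central $\Z$-extension of $\M_{g,r}$, then prove it is the universal one. Where your plan diverges from the paper, and where it has a real gap, is in the two places you flag as ``bookkeeping''.

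\medskip
\noindent\textbf{Base case: the star coefficients are not all ``the same by symmetry''.}
Your claim that once one star relator is calibrated to $\mu^{-1}$ the others follow ``by mapping-class-group symmetry'' does not go through as stated. For a good triple $(i,j,k)$ the curves $c_{i,j},c_{j,k},c_{k,i}$ can be separating or non-separating depending on the indices, so the star configurations are in general \emph{not} conjugate in $\M_{g,1}$ and there is no automorphism carrying one relator to another. A priori the integers $n_{r}$ could differ from triple to triple; this is exactly the content that has to be proven. The paper does not lift words into the chain presentation at all. Instead it builds a homomorphism $\mathcal{G}_{g,1}'\to\As(\D)$ over $\theta_{1}:\M_{g,1}\to\M_{g}$, sending each Gervais generator $\gamma$ to $e_{\theta_{1}(\gamma)}$ corrected by powers of $\kappa_{\mathrm{lantern}}$ and of the separating-lantern elements $\mathcal{L}_{k}$, and sending $\mu$ to $\kappa_{3\textrm{-}\mathrm{chain}}\kappa_{\mathrm{lantern}}^{10}$. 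The verification that the star relation (iii) holds uses the Endo--Nagami computation \cite[Prop.~3.13]{EN}: under $I_{g}$ the image of a star relator is $5-N_{i,j,k}$ and under $\epsilon_{\rm ns}$ it is $9-N_{i,j,k}$, where $N_{i,j,k}$ counts the separating $c$'s. The built-in correction factors absorb precisely this $N_{i,j,k}$-dependence, so \emph{every} star relator lands on the same central element $\kappa_{3\textrm{-}\mathrm{chain}}\kappa_{\mathrm{lantern}}^{10}$. Your Maslov/Tietze calibration would have to reproduce this nonuniform cancellation triple by triple; without it the base case is incomplete.

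\medskip
\noindent\textbf{Inductive step: Birman versus capping.}
The paper does not use the Birman sequence. It uses the capping map $\theta_{r}:\M_{g,r}\to\M_{g,r-1}$ together with the pair-of-pants inclusion $\iota_{r}$; Harer--Ivanov stability gives that $\theta_{r}^{*}$ is an isomorphism on $H^{2}(-;\Z)\cong\Z$, so the class of the extension $\mathcal{G}_{g,r}'$ is a generator of $H^{2}(\M_{g,r};\Z)$ exactly because (by induction) the class of $\mathcal{G}_{g,r-1}'$ is. Your Birman route is in principle viable, but your sketch is not: passing from $r$ to $r+1$ adds not only $a_{2g+r-1}$ but also all the new $c_{i,2g+r-1}$ and $c_{2g+r-1,j}$, together with new handle, braid, and star relations involving them; and ``$H_{2}(\pi_{1}(\Sigma_{g,r}))=0$'' alone does not show that the universal central extension of $\M_{g,r+1}$ has the claimed presentation --- you still need to identify the extension class, which is again an $H^{2}$ computation equivalent to the stability argument the paper invokes.
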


In analogy to Theorem \ref{thm1}, this presentation is a lift of Gervais's presentation \cite{Ge2} of $\mathcal{M}_{g,r} $.
More precisely, the theorem 1 in \cite{Ge2} with $g \geq 1, \ r \geq 1 $ says that
the group with the presentation subject to $\mu=1 $ is isomorphic to $\M_{g,r} $.

\begin{figure}[h]
$$
\begin{picture}(220,109)
\put(-72,43){\pc{L.generator18}{0.32116644466755}}

\put(152,43){\pc{L.generator19}{0.32116644466755}}

\put(-48,38){\large $\alpha_5$}

\put(-25,32){\large $\alpha_4$}
\put(16,29){\large $\alpha_{2}$}
\put(-7,25){\large $\alpha_{3}$}

\put(-17,109){\large $\alpha_{2g-1}$}
\put(19,109){\large $\alpha_{2g} $}

\put(-26,70){\large $\alpha_{2g-2} $}
\put(-29,52){\large $\beta$}
\put(-77,61){\large $\beta_{g-1}$}
\put(-52,3){\large $\beta_2 $}
\put(65,3){\large $\beta_{1}$}
\put(72,36){\large $\alpha_{1} $}
\put(72,66){\large $\alpha_{2g+r-2} $}

\put(148,68){\large $\alpha_{j} $}
\put(262,88){\large $\alpha_{k} $}
\put(218,-13){\large $\alpha_{i} $}
\put(178,24){\large$c_{i,j}$}
\put(259,26){\large $c_{k,i }$}
\put(208,82){\large$c_{j,k}$}

\end{picture}
$$

\caption{\label{gg} Gervais's generators of $\M_{g, r}$ with $g \geq 2$, $r\geq 1$.}
\end{figure}
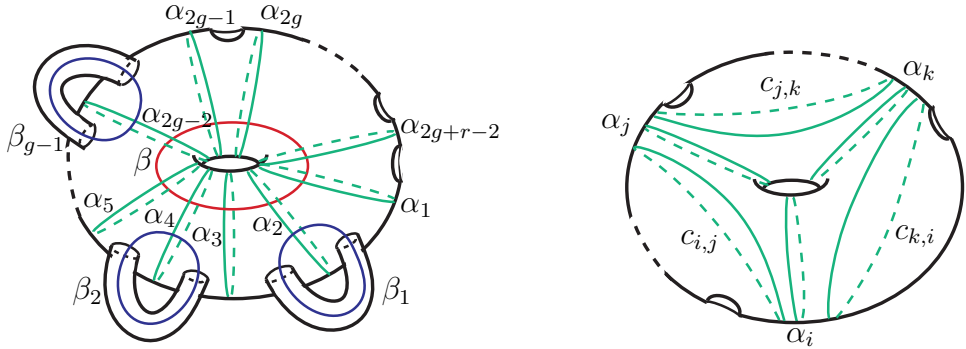

Finally, we focus on the case $g=2$.
Although $\mathcal{T}_{2}$ is not perfect,
we will give a presentation of a $\Z$-central extension $ \mathcal{T}_{2} $ of $\M_2$.
Here, $ \mathcal{T}_{2} $  is the central extension associated with a generator of 
$H^2(\M_2  ;\Z ) \cong H_1(\M_2 ) \cong \Z/10$.
\begin{thm}\label{thm2} 
The $\Z$-central extension $ \mathcal{T}_{2} $ of $\M_2$ has a presentation with generators $ c_1$, $c_2$, $c_{3}$, $c_{4}$ and $c_5 $.
Here, the relations are defined by the two preceding ones \eqref{cet1}, \eqref{cet2} and the followings: 
\begin{equation}\label{cet24}  (c_1 c_2 c_3)^4 c_5^{-2}  = c_{5} c_{4} c_3 c_2 c_1 c_1 c_2c_{3} c_{4} c_5 , \end{equation}
\begin{equation}\label{cet25666} [c_{5} c_{4} c_3 c_2 c_1 c_1 c_2c_{3} c_{4} c_5 ,c_5 ]=1 . \end{equation}
\end{thm}
Although we do not focus on the remaining case of genus one, the presentation of $\mathcal{T}_{1,r}$ was already studied (see \cite{GM}).
In conclusion, our results are summarized to that
most of the central extensions $ \mathcal{T}_{g,r}$
can be dealt with concretely as finitely presented groups. 

\

Finally, we list several little applications in some topics.
First, 
the results are useful for checking whether a linear representation $\rho: \mathcal{T}_{g,r} \ra GL_n (\mathbb{C})$ is well-defined
or not: For instance, if $r=0$ or $r=1$, according to Theorem \ref{thm1}, it is necessary to check that the lantern relation and the 3-chain relation
are sent to $ c\cdot \mathrm{id}_{ \mathbb{C}^n}$ for some $c\in \mathbb{C}.$
Furthermore, the theorems might be useful
in normalizing (constant factors of) Kohno's description \cite{Koh} on 3-manifold invariants and in computing concretely Lefschetz fibration invariants in \cite{Nos2}, which are constructed from $\rho_{\mathcal{G},k} $.
In addition, the presentation of $ \mathcal{T}_{g,r}$ seems compatible with the setting in the chart description \cite{EHKT}. 

\section{Proof of the theorems.}\label{ss2092}
This section 
is dedicated to proving the theorems stated above.
Here, we assume that the reader has basic knowledge of group cohomology and the mapping class group (see \cite[\S 1-5]{FM} and \cite{Kor}).

We briefly explain an outline of the proofs. 
First, we review the group ``$\As (\DD)$" that is introduced in the
study of the TQFT \cite{MR}; we show an isomorphism
$\As (\DD) \cong \Z \times \mathcal{T}_{g,0}$ for $g \geq 3$; see Theorem \ref{thm3}. As a corollary, we verify that
the group presented in Theorem \ref{thm1} (II) is isomorphic to $\As (\DD)$ without the $\Z$-factor.
In parallel, concerning Theorem \ref{thm2}, a similar discussion is applicable to the genus two case.
After that, we show Theorem \ref{thm1} (I) by using the universality of the central extensions and
Harer stability \cite{Har}. 
Next, we will give a proof of Theorem \ref{thm4} by induction on $r \geq 1$.
Actually, the group presented in Theorem \ref{thm4} for $r=1$ is shown to be
isomorphic to that in Theorem \ref{thm1}, and
the proof for $r \geq 2$ is similarly done by using a result of 
Harer stability \cite{Har}.


\subsection{Preliminaries: the associated group}\label{s3}
To accomplish the outline, we start by introducing terminologies
and state a key proposition (Theorem \ref{thm3}).
First, denoting $ \M_{g,0} $ by $\M_g $ for short, we set the following three subsets: 
\begin{equation}\label{defdg}\D:= \{ \ \tau_{\alpha } \in \M_{g} \ | \ \alpha \textrm{ is a (unoriented) simple closed curve} \ \gamma \ {\rm in \ } \Sigma_g \ \} , \end{equation}
\begin{equation}\label{defdg11}\DD:= \{ \ \tau_{\alpha } \in \D \ | \ \alpha \textrm{ is a non-separating simple closed curve} \ \gamma \ {\rm in \ } \Sigma_g \ \} , \end{equation}
\[ \D^{(k)}:= \{ \ \tau_{ \alpha} \in \D \ | \ \mathrm{\ The \ complement \ } \Sigma_g \setminus \alpha \mathrm{\ is \ homeomorphic \ to \ } \Sigma_{k,1} \sqcup \Sigma_{g-k,1} \ \}, \]
where the symbol $\tau_{\alpha}$ is the (positive) Dehn twist along $\alpha$.
Next, for $Z=\D $ or $Z=\DD$, we will analyze the group $\mathrm{As}(Z)$, which is considered in \cite{MR}.
Here $\mathrm{As}(Z)$ is defined to be the abstract group generated by symbols $e_z$ with $z \in Z$ subject to the relation $e_{w^{-1}zw }= e_{w}^{-1}e_z e_w $
and is called {\it the associated group}.
Note that the inclusion $\D\hookrightarrow \M_{g }$ gives rise to a group epimorphism
$\mathcal{E}: \As(Z) \ra \M_{g}$ by definition,
and we obtain 
the equality
\begin{equation}\label{rel} g e_{z} g^{-1} = e_{ \mathcal{E} (g e_{ z} g^{-1}) } \in \As(Z), \ \ \ \ \ {\rm for \ any \ \ } \ z \in Z, \ \ g\in \As(Z) ,
\end{equation}
which is easily verified by induction on the word length of $g$.
The reader should keep in mind this equality, since we will use \eqref{rel} in several times.
For example, as a result of \eqref{rel}, the kernel $\Ker ( \mathcal{E}) $ is contained in the center.
In summary, we have
\begin{equation}\label{bba} 0\lra \Ker (\mathcal{E}) \lra \As(Z) \stackrel{ \mathcal{E}}{\lra} \M_{g} \lra 0 , \ \ \ \ \ \ ({\rm central \ extension}).\end{equation}
In addition, we can explicitly determine the central extension $\As(Z)$ as follows:
\begin{thm}\label{thm3} {\rm (cf. \cite[Theorem C]{Ge})}
If $ g \geq 3$, there are isomorphisms $\As(\D) \cong \mathcal{T}_{g,0} \times \Z^{[g/2]+1} $
and $\As(\DD) \cong \mathcal{T}_{g,0} \times \Z $.

\end{thm}
\noindent
Here, we shall refer to \cite[Theorem C]{Ge} which claimed the same statement on $\As(\DD) $
and discussed an infinite presentation of $ \mathcal{T}_{g,r}$ from the $p^1$-structure.
But, his proofs
used functoriality with respect to lantern relations of $\widetilde{\M}_g$ \cite[Theorems C, 4.1]{Ge}, and
might contain a little gap with $g=3$.
In contrast, the proofs in this paper use neither combinatorial computation as in \cite{Ge}
nor the Maslov index as in \cite{Ati,GM,Turaev},
but only basic knowledge of group cohomology of degree 2. 



In addition, let us describe the concepts of lantern relations in $\As(\D)$. 
If $g \geq 3$, consider two elements of the form
$$ \kappa_{3\textrm{-}\mathrm{chain }} := (e_{c_1}e_{c_2}e_{c_3})^4 e_{c_0}^{-1}e_{b_0}^{-1}, \ \ \ \ \ \kappa_{\rm lantern} := e_{c_1}^{-1}e_{c_3}^{-1}c_{c_5}^{-1}e_{b_{3}}^{-1} e_{c_0}e_{b_2}e_{b_1} \in \As(\D), $$
where $b_i$ and $ c_i$ are the respective Dehn twists of the curves $\beta_i$ and $\ \gamma_i$ in Figure \ref{tg261};
if $g=2$, we define $\kappa_{3\textrm{-}\mathrm{chain }} $ to be $(e_{c_1}e_{c_2}e_{c_3})^4 e_{c_5}^{-2} $ 
and $\kappa_{\rm lantern} $ to be $\mathrm{1}_{\As (\D)}$.
As is well-known, $ \mathcal{E} (\kappa_{3\textrm{-}\mathrm{chain }} )$ and $ \mathcal{E} (\kappa_{\rm lantern} ) $ are the identity in $\mathcal{M} _g$,
which are commonly called the {\it 3-chain relation} and the {\it lantern relation}, respectively.
Furthermore, for $k <g/2$, set up seven curves $\alpha_k , \beta_k, \gamma_k, \delta_k, x_k,y_k,z_k$ in $\Sigma_g$ illustrated in
Figure \ref{lanternfig}, and
consider the product
$$\mathcal{L}_{k}:= e_{\tau_{\alpha_k} }^{-1} e_{\tau_{\beta_k}}^{-1} e_{\tau_{\gamma_k}}^{-1} e_{\tau_{\delta_k}}^{-1} e_{\tau_{x_k}} e_{\tau_{y_k}}e_{\tau_{z_k}} \in \As (\D). $$
The lantern relations in $\M_g $ tell us that these $ \kappa_{\rm lantern} $ and $\mathcal{L}_{k} $ lie in $\Ker (\mathcal{E} )$.

In addition, for $\dagger = 1,2, \dots, [g/2]$ or $\dagger = {\rm ns}$,
we define a homomorphism $ \epsilon_{\dagger} : \As (\D) \ra \Z $ by setting
$ \epsilon_{\dagger} ( e_x)=1 \in \Z $ if $ x \in \D^{(\dagger)}$ and by setting $ \epsilon_{\dagger} ( e_x)=0 \in \Z $ otherwise. 
Since the orbit decomposition of the conjugate action $ \D \curvearrowleft \M_g $ is presented as
$\D = \DD \cup \bigl( \sqcup_{0 < k \leq g/2}\D^{(k)} \bigr) $, 
it follows from \eqref{rel} that
the sum $(\bigoplus \epsilon_j) \oplus \epsilon_{\rm ns}$ gives an abelianization $ H_1(\As (\D) ) \cong \Z^{[\frac{g}{2}]+1 } $.
\vskip 2.15pc
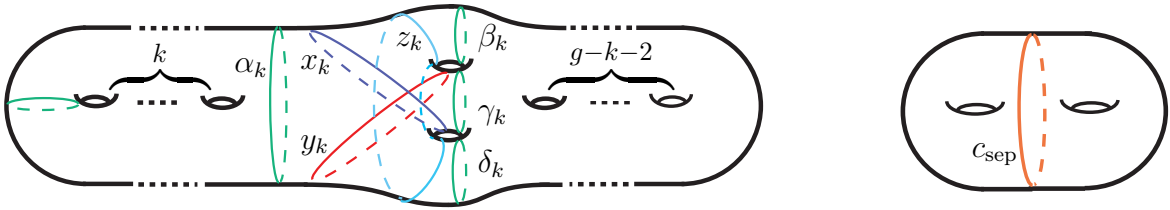
\begin{figure}[htpb]
\begin{picture}(400,60)
\put(100,50){\large $\alpha_k $}
\put(125,51){\large $x_k $}
\put(125,21){\large $y_k$}

\put(161,60){\large $z_k $}

\put(192,60){\large $\beta_k $}
\put(192,32){\large $\gamma_k $}
\put(192,12){\large $\delta_k$}

\put(379,18){\large $c_{\rm sep} $}

\put(53,43){\Large $\overbrace{ \ \ \ \ \ \ \ \ }^{k}$ }
\put(219,43){\Large $\overbrace{\ \ \ \ \ \ \ \ \ \ }^{g-k-2} $ }

\put(8,33){\pc{lantern}{0.424}}

\put(347,28){\pc{L.generator15}{0.433824}}
\end{picture}
\caption{\label{lanternfig} The $k$-th lantern relation $\mathcal{E}(\mathcal{L}_k ) $ in $\M_g$ with $g>2, \ k\geq 1 $, and the curve $c_{\rm sep}$ with $g=2$.}
\end{figure}
Furthermore, let us review \cite[Theorem 4.2]{EN}.
Notice from \eqref{rel} that, if a tuple $\mathbf{z}=(z_1, \dots, z_m) \in (\D)^m$ satisfies
$ z_1\cdots z_m=1_{\M_g}$, the product $ e_{z_1} \cdots e_{z_m} \in \As(\D )$
lies in the central kernel $\Ker(\mathcal{E} : \As(\D) \ra \M_g ) $.
Furthermore, we can construct a closed oriented 4-manifold $ E_{ \mathbf{z}}$ from
such a tuple $\mathbf{z} \in (\D)^m$, which is called {\it a Lefschetz fibration (over $S^2$)}; see, e.g., \cite{EN} for the definition.
Inspired by \cite{Ge} and the Hopf theorem on $H_2$,
Endo and Nagami \cite[Definition 3.3 and Proposition 3.6]{EN} constructed a homomorphism $I_g:\Ker(\mathcal{E} ) \ra \Z$ that enjoys the following property:
\begin{thm}[{\cite[Theorem 4.2 and Propositions 3.10-3.12]{EN}}]\label{ENresult}
The homomorphism $I_g $ satisfies 
$$I_g (e_{z_1} \cdots e_{z_m})= \sigma_{ \mathbf{z}}  \in \Z , $$
for any $m$-tuple $(z_1,\dots, z_m) \in (\D)^m$ with $z_1\cdots z_m =1_{\M_g} \in \M_g$.
Here $m_{\rm ns} \in \Z$ is the number $\{ j \ | z_j \in \DD \}$,
and $ \sigma_{ \mathbf{z}} $
is the signature of the associated 4-manifold $ E_{ \mathbf{z}}$. 

Moreover,
$I_g (\kappa_{3\textrm{-}\mathrm{chain }} )=-6$; Furthermore, if $g \geq 3$, then $ I_g ( \kappa_{\rm lantern} ) = I_g ( \mathcal{L}_k )= 1$.
\end{thm}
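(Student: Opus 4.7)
The plan is to construct $I_g$ cohomologically using a primitive 2-cocycle on $\M_g$, and then identify its value on a positive word $e_{z_1}\cdots e_{z_m}\in\Ker(\mathcal{E})$ with $\sigma_{\mathbf{z}}+m-m_{\rm ns}$ via the Meyer--Turaev description of the signature cocycle. Since $H^2(\M_g;\Z)\cong\Z$ for $g\geq 3$, fix a primitive representative $\tau_g$. Because each of $\DD$ and $\D^{(k)}$ is a single $\M_g$-conjugacy class (change-of-coordinates principle), one may choose a conjugation-equivariant lift $\widetilde{z}\in\mathcal{T}_g$ of each Dehn twist $z\in\D$ to the universal central extension. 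The assignment
\begin{equation*}
e_{z_1}\cdots e_{z_m}\longmapsto \widetilde{z_1}\cdots\widetilde{z_m}\in\Z\subset\mathcal{T}_g,\qquad \text{when } z_1\cdots z_m=1_{\M_g},
\end{equation*}
then descends to a well-defined homomorphism $I_g:\Ker(\mathcal{E})\ra\Z$, since the defining relation $e_{w^{-1}zw}=e_w^{-1}e_z e_w$ of $\As(\D)$ is preserved precisely by the equivariance of the chosen lifts.

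Next, I would verify the main formula. By the theorems of Meyer and Turaev, $4\tau_g$ equals the Meyer signature cocycle pulled back along the symplectic representation $\M_g\to Sp(2g;\Z)$, so the ``raw'' cocycle sum $\sum_j\tau_g(z_1\cdots z_{j-1},z_j)$ along a positive factorization of $1_{\M_g}$ computes the signature $\sigma_{\mathbf{z}}$ of the associated Lefschetz fibration $E_{\mathbf{z}}$. The gap between this raw sum and $I_g(e_{z_1}\cdots e_{z_m})$ collects the individual discrepancies, one per generator, between the cocycle-determined lift of each $z_j$ and the chosen equivariant lift. A local computation in $Sp(2g;\Z)$ (essentially a Maslov-index calculation in the spirit of Atiyah) shows this discrepancy contributes $+1$ for every non-separating Dehn twist and $0$ for every separating one, producing the correction term $m-m_{\rm ns}$. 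Consistency under the conjugation relations of $\As(\D)$ is automatic on both sides, since Hurwitz moves of positive factorizations preserve $\sigma_{\mathbf{z}}$ and leave the counts $m$ and $m_{\rm ns}$ unchanged.

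For the explicit values, I would rewrite $\kappa_{3\textrm{-}\mathrm{chain}}$ and $\kappa_{\rm lantern}$ as positive factorizations of $1_{\M_g}$ (absorbing the inverse letters by multiplication in $\Ker(\mathcal{E})$) and identify each with a classically studied Lefschetz fibration whose signature is tabulated: the lantern fibration corresponds to a blow-up of a Hirzebruch surface, and the 3-chain fibration likewise has a well-known total space. Combining these signatures with direct counts of $m$ and $m_{\rm ns}$ should yield $I_g(\kappa_{\rm lantern})=1$ and $I_g(\kappa_{3\textrm{-}\mathrm{chain}})=-6$. The main technical obstacle will be the cohomological construction together with the local signature-defect calculation producing exactly $+1$ for non-separating twists; this is the delicate interplay between the algebraic 2-cocycle and the geometric signature that demands the most care, everything else being bookkeeping with Hurwitz moves and tabulated signature values.
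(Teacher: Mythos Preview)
The paper does not prove this statement at all: Theorem~\ref{ENresult} is quoted verbatim from Endo--Nagami \cite[Theorem 4.2 and Propositions 3.10--3.12]{EN} and is used as a black box in the subsequent proofs of Theorems~\ref{thm3} and~\ref{thm1}. There is therefore no ``paper's own proof'' to compare your proposal against; you have written a sketch of a possible proof of a cited result, not of something the present paper establishes.

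That said, a couple of remarks on your sketch for its own sake. First, your bookkeeping for the correction term is reversed: the formula reads $I_g-\sigma_{\mathbf z}=m-m_{\rm ns}$, which is the number of \emph{separating} twists, so the local defect should be $+1$ for separating and $0$ for non-separating, not the other way around. Second, your construction of $I_g$ via equivariant lifts into the universal central extension $\mathcal{T}_g$ presupposes $g\geq 3$ (perfectness of $\M_g$ and $H^2(\M_g;\Z)\cong\Z$), whereas the statement, and in particular the value $I_g(\kappa_{3\textrm{-}\mathrm{chain}})=-6$, is also asserted and used for $g=2$; your plan would need a separate argument there. Finally, the step ``choose a conjugation-equivariant lift $\widetilde z\in\mathcal{T}_g$ of each Dehn twist'' hides a well-definedness check: for two $w,w'$ with $w^{-1}zw=w'^{-1}zw'$ one needs the commutator $[\widetilde{w'w^{-1}},\widetilde z]$ to vanish in the center $\Z$, which is not automatic and is essentially the content you are trying to prove. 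Endo--Nagami's actual construction in \cite{EN} proceeds differently, via the Hopf presentation of $H_2(\M_g)$ and direct signature computations for explicit Lefschetz fibrations, avoiding this circularity.
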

\subsection{Proofs of theorems for $g\geq 3$. }\label{s311}
We begin by proving Theorem \ref{thm3}, and show Theorems \ref{thm1}--\ref{thm4}.
Here, we should set up the epimorphism
\begin{equation}\label{rrr} \theta_r : \M_{g,r} \lra \M_{g,r-1},
\end{equation}
induced by gluing a disc to the boundary component of $\Sigma_{g,1}$.
\begin{proof}[Proof of Theorem \ref{thm3}]
For the proof, we now analyze the group $\As (\D)$ with $g \geq 3$,
as a special tool in quandle theory similar to \cite[\S\S 2--3]{N1}.
Consider a homomorphism $\mathfrak{s}_j :\Z \ra \As (\D) $
which sends $ n$ to $ (\mathcal{L}_{j})^n.$
These $\mathfrak{s}_j $ is a section of $\epsilon_j$, and
the image is contained in the center $\Ker (\mathcal{E})$ because of \eqref{rel}.
Similarly, take the map $\mathfrak{s}_{\rm ns} :\Z \ra \As (\D) $
which sends $ n$ to $ (e_{c_0} e_{b_2} e_{b_1} ( e_{c_1} e_{c_3}e_{ c_5}e_{ b_3})^{-1})^{-n}.$
Therefore, the semi-direct product structure associated with $ \mathfrak{s}_j $ and $\mathfrak{s}_{\rm ns} $ is trivial;
hence, we have the decomposition $\As (\D)
\cong \tilde{\mathcal{M}}_g\times \Z^{[\frac{g}{2}]+1 }$ for some
central extension $\tilde{\mathcal{M}}_g$ of $\mathcal{M}_g$.
By K\"{u}nneth formula on $H_1$, this $\tilde{\mathcal{M}}_g$ is perfect.

Hence, it is sufficient to show $ \tilde{\mathcal{M}}_g \cong \mathcal{T}_{g,0}$.
To this end, notice from the infraction-restriction exact sequence (cf. \eqref{OO} below)
that 
the kernel of $\tilde{\mathcal{M}}_g \ra \M_g$ is a quotient of $H_2(\M_g)$,
since $ \tilde{\mathcal{M}}_g$ is perfect.
Here, we should notice that the kernel contains $\Z$.
Actually, it follows from Theorem \ref{ENresult} that
the sum of the homomorphisms $ (I_g \oplus \epsilon_{\rm ns}) \oplus (\bigoplus \epsilon_j) : \Ker( \mathcal{E} ) \ra \Z^{ [\frac{g}{2}]+2}$ is
of order 4 at most.

We will show $ \tilde{\mathcal{M}}_g \cong \mathcal{T}_{g,0}$ as a result of the preceding claim. 
First, if $g\geq 4$, the kernel of $\tilde{\mathcal{M}}_g \ra \M_g$ must be $H_2(\M_g) \cong \Z$;
hence, the universality of the central extensions implies $\tilde{\mathcal{M}}_g \cong \mathcal{T}_{g,0} $ as desired.

Next, we address the case $g=3$.
We should refer the fact $H_2(\M_{3,1} ) \cong H_2(\M_3 ) \cong \Z \oplus \Z/2$ \cite[Corollary 4.10]{Sakasai};
Thus, the kernel of the projection $q: \tilde{\mathcal{M}}_3 \ra \M_3 $ 
is either $\Z$ or $\Z/2 \oplus \Z $. 
We will show that the kernel $\Ker (q)$ is $\Z $.
For this, consider the pullback of $q : \tilde{\mathcal{M}}_3 \ra \M_3 $ and $\theta_1 $, 
and denote it by $\tilde{\mathcal{M}}_{3,1}  $
Since the induced map $\theta_1: H_2(\M_{3,1} ) \ra H_2(\M_3 ) $ is known to be isomorphic \cite[\S 4]{Sakasai}, it is enough for the proof of $ \Ker (q)\cong \Z$ to show that $\tilde{\mathcal{M}}_3 $
is not the universal central extension of $ \M_3$.  
Consider the group with presentation
$$\mathcal{M}_{3,1}^{\rm pre} := \langle \ c_0, c_1,  \dots, c_{7} \ | \ \textrm{the relations} \ \eqref{cet1}, \ \eqref{cet2}, \mathrm{ \ and \ }  \eqref{cet4} \ \rangle. $$
The Wajnryb presentation \cite{Waj} implies that $\mathcal{M}_{3,1}^{\rm pre} $ is a central $\Z$-extension over $\mathcal{M}_{3,1} $ and the kernel is $\Z = \{ \mu^{m} \}_{m\in \Z} $, 
and that $\mathcal{M}_{3,1}^{\rm pre} $ is perfect by the lantern relation \eqref{cet5}.
Then, we shall notice that the canonical map $ \mathcal{M}_{3,1}^{\rm pre} \ra \mathcal{M}_{3,1}$ which sends $ c_i$ to $e_{c_i}$ is surjective, since every generator $e_{\tau_{\alpha} }$ is conjugate to $c_0 $. Hence, if $\tilde{\mathcal{M}}_{3,1} $
is the universal central extension of $ \M_{3,1}$, there is such no map from any central extension of $\M_{3,1} $ with fiber $\Z$. Hence, $\tilde{\mathcal{M}}_{3,1} $ is not universal. Hence,  the kernel $\Ker(q) $ is $\Z$, leading to $\tilde{\mathcal{M}}_3  \cong \mathcal{T}_3 $. 


Finally, we show $\As (\DD) \cong \mathcal{T}_{g,0} \times \Z$ with $g\geq 3 $.
Notice from \eqref{rel} that $ \epsilon_{\rm ns}:\As (\DD) \ra \Z$ is the abelianization. 
Similarly, the image of $\mathfrak{s}_{\rm ns} :\Z \ra \As (\DD) $
is contained in the center $\Ker (\mathcal{E})$. Therefore, the semi-direct product structure is trivial, i.e.,
$\As (\DD)\cong \tilde{\mathcal{M}}_g'\times \Z$ for a central extension $\tilde{\mathcal{M}}_g'$ of $\mathcal{M}_g$.
Thus, it is enough to show $\tilde{\mathcal{M}}_g' \cong \mathcal{T}_{g,0} $.
For this, the map $\As (\DD) \ra \As(\D) $ induced by the inclusion $ \DD \ra \D $
implies $\tilde{\mathcal{M}}_g'\times \Z  \ra \mathcal{T}_{g,0}\times \Z^{[\frac{g}{2}]+1 }$ over $\mathcal{M}_g $. Note that the image of $\tilde{\mathcal{M}}_g'$ is contained in $\mathcal{T}_{g,0}$, since $\tilde{\mathcal{M}}_g'$ is also perfect.
Thus, the universality of $\mathcal{T}_{g,0}$ immediately lead to the desired $\tilde{\mathcal{M}}_g' \cong \mathcal{T}_{g,0}$.
\end{proof}
We are now in a position to prove Theorem \ref{thm1}.
Notice from Theorem \ref{ENresult} that
\begin{equation}\label{rrr2}I_g(\kappa_{3\textrm{-}\mathrm{chain }} \kappa_{\rm lantern}^{10} )=4, \ \ \ \ \ \epsilon_{\rm ns} (\kappa_{3\textrm{-}\mathrm{chain }} \kappa_{\rm lantern}^{10} )=0.\end{equation}
Since the cokernel of $(I_g \oplus \epsilon_{\rm ns}) \oplus (\bigoplus \epsilon_j) $ is $\Z/4$ as in the proof above,
this $\kappa_{3\textrm{-}\mathrm{chain }} \kappa_{\rm lantern}^{10} $ is a generator of the center $\Z= \Ker (\mathcal{T}_{g,0} \ra \mathcal{M}_g )$ in \eqref{tyuusing2}.
\begin{proof}[Proof of Theorem \ref{thm1}.]
Let $r=0$ or $1$, let $\mathcal{G}_{g,r}$ be the group with the presentation given in Theorem \ref{thm1},
and let $ q_r :\mathcal{G}_{g,r} \ra \mathcal{M}_{g,r} $ be the quotient map by adding the relation $\mu= 1.$
Noting the relation \eqref{cet4}, the map $ q_r $ is a central extension with fiber $\Z$.

We first show (II). 
Using \eqref{rel},
we can verify that the correspondence
$$ c_i \longmapsto e_{c_i } \kappa_{\rm lantern}, \ \ \ \ \ \mu \longmapsto \kappa_{3\textrm{-}\mathrm{chain }} \kappa_{\rm lantern}^{10} $$
defines a homomorphism $ \psi : \mathcal{G}_{g,0} \ra \As (\DD)$ over $\M_g .$
The image is contained in $ \mathcal{T}_{g,0}  \cong  \Ker (\epsilon_{\rm ns} : \As (\DD) \ra \Z )$ by definition.
Hence, since $\mathcal{T}_{g,0} $ is universal modulo torsion subgroup,
$ \psi$ must be an isomorphism $ \mathcal{G}_{g,0} \cong \mathcal{T}_{g,0}$ as required.

\noindent

Next, we show (I). 
From the definition of \eqref{rrr}, we have a commutative diagram:
\begin{equation}\label{ddd}{\normalsize
\xymatrix{
0 \ar[r] & \Z \ar[rr] \ar[d] \ar[rr]&& \mathcal{G}_{g,1} \ar[rr]^{q_1}\ar[d]^{\rm proj.} & & \mathcal{M}_{g,1} \ar[r] \ar[d]^{\theta_1 } &0 & ({\rm central \ extension}) \\
0 \ar[r] & \Z \ar[rr] && \mathcal{G}_{g,0} \ar[rr]^{q_0} & & \mathcal{M}_g \ar[r] & 0 & ({\rm central \ extension}) .
}}
\end{equation}
From the definitions of $\mathcal{G}_{r,*} $, the left vertical map is isomorphic.
It is known as the Harer stability with $ g \geq 3$ (see \cite[\S 6]{Kor}) that the right induced map
$ \theta_1^*: H^2(\M_{g} ;\Z) \ra H^2(\M_{g,1} ;\Z)$ is an isomorphism on $\Z$.
Since $ \mathcal{G}_{g,0} \cong \mathcal{T}_{g,0} $, the universality of the central $\Z$-extensions
implies the desired isomorphism $ \mathcal{G}_{g,1} \cong \mathcal{T}_{g,1} $.
\end{proof}

Now, let us turn into proving Theorem \ref{thm4} for the punctured groups.
Let us denote by $ \mathcal{G}_{g,r}'$ the group with the presentation given in Theorem \ref{thm4}.
Recalling the quotient $ \mathcal{G}_{g,r}'/ \langle \mu=1 \rangle \cong \M_{g,r}$ as a result of \cite[Theorem 1]{Ge2},
we see from the relation (iv) that the projection $\mathcal{G}_{g,r}' \ra \M_{g,r} $ is a central $\Z$-extension.

\begin{proof}[Proof of Theorem \ref{thm4}]
As the first step of the induction on $r$,
we let $g \geq 3, \ r=1$ and will observe a diagram similar to \eqref{ddd}.
Consider the map $ q_1 : \mathcal{G}_{g,1}' \ra \mathcal{M}_{g,1} $
which takes each generator $\alpha $ of $ \mathcal{G}_{g,1} '$ to the corresponding Dehn twist $ \tau_{\alpha }$.
It immediately follows from \cite[Lemma 5]{Ge2} that
the kernel of $q_1 $ is generated by only $ \mu =1 $, 
and that $q_1$ is a central $\Z$-extension from the definition of $\mu$.
Furthermore, using the homomorphism $\theta_1 $ in \eqref{ddd}, 
consider the correspondence from $\mathcal{D}_g $ to $\As (\D)$ defined by
\begin{equation}\label{ddddd}
\theta_1(\gamma) \longmapsto e_{\theta_1 (\gamma)}  \kappa_{\rm lantern}^{\epsilon_{\rm ns} (\theta_1 (\gamma))} \mathcal{L}_1^{\epsilon_{1} (\theta_1 (\gamma)) } \cdots \mathcal{L}_{[g/2]}^{\epsilon_{[g/2]} (\theta_1 (\gamma)) } , \ \ \ \ \ \theta_1(\mu ) \longmapsto \kappa_{3\textrm{-}\mathrm{chain }} \kappa_{\rm lantern}^{10} .
\end{equation}
Here, $ \gamma$ runs over the generators in Theorem \ref{thm4}, and $\mathcal{L}_k $ is the central element defined in \S \ref{s3}.
Notice from \cite[Proposition 3.13]{EN} that
the homomorphism $I_g :\Ker (\mathcal{E}) \ra \Z $ in Theorem \ref{ENresult} (resp. $\epsilon_{\rm ns}$) sends
the star relation (iii) to $ 5- N_{i,j,k} $ (resp. $ 9 + N_{i,j,k}$),
where $N_{i,j,k}$ is the cardinality of $\{ c_{i,j} , \ c_{j,k}, \
c_{k,i} \ | \ c_{x,y} \ {\rm is \ separating.}\ \}$.
Hence, compared with \eqref{rrr2}, the correspondence \eqref{ddddd} defines a map $ \mathcal{G}_{g,1}' \ra \As (\D)$ as a centrally extended homomorphism over $\theta_1 $.
Note that
the image is $\Ker ( \oplus \epsilon_{\dagger}: \As(\D) \ra \Z^{[g/2]+1 }) \cong \mathcal{T}_{g,0} $ by definition.
Hence, $\mathcal{G}_{g,1} '$ must be $\mathcal{T}_{g,1}$ by a diagram chasing similar to \eqref{ddd} and the universality of central $\Z$-extensions

Finally, we now complete the proof with $r \geq 2$.
Consider the canonical surjection $ p_{r}: \mathcal{G}_{g,r}' \ra \mathcal{G}_{g,r-1}'$ obtained from the
presentations.
Then, the quotient $p_r $ modulo $ \mu=1 $ is identified with $ \theta_r: \M_{g,r} \ra \M_{g,r-1 } $ in \eqref{rrr}.
In addition, consider the injection
$\iota_r: \M_{g,r-1} \ra \M_{g,r }$
induced by gluing a two holed disc to a boundary component of $\Sigma_{g,r -1 }$.
From the Harer stability (see \cite{Kor}), the induced map $ \iota_{r}^*: H^2(\M_{g,r } ;\Z) \ra H^2( \M_{g,r-1 } ;\Z)$ is known to be an isomorphism on $\Z$.
Furthermore, since $ \theta_{r} \circ \iota_r = \mathrm{id}$,
the induced map $ \theta_{r}^* : H^2(\M_{g,r-1 } ;\Z) \ra H^2( \M_{g,r } ;\Z)$ is an isomorphism on $\Z$.
Consequently, the surjection $ p_{r}$ induces an isomorphism on $H^2$.
Hence, since $ H^2( \mathcal{G}_{g,r-1}';\Z) \cong 0 $ by induction on $r$,
we have $ H^2( \mathcal{G}_{g,r}';\Z) \cong 0 $.
In conclusion,
the central $\Z$-extension $ \mathcal{G}_{g,r}' $ is also universal up to torsion subgroup, that is, $\mathcal{G}_{g,r } ' \cong \mathcal{T}_{g,r}$ as desired.
\end{proof}


\subsection{The case of two genus}\label{s3777}
This subsection is devoted to giving the proof with $g=2$. 

As a preliminary, we will describe the extension $\mathcal{T}_2$ in terms of $\mathcal{D}_2$. 

For this, we begin by showing the isomorphism \eqref{dsds} below. 
Consider the homomorphism $\mathfrak{s} $ defined by setting
\[ \mathfrak{s}:\Z \lra \As(\Dd ); \ \ \ n\longmapsto \bigl( (e_{c_1 } e_{c_2} )^{6}e_{\tau_{c_{\rm sep}}}^{-1}\bigr)^n , \]
Here and $ c_{\rm sep}$ are the curve 
described in Figure \ref{lanternfig}, respectively.
Then the 2-chain relation in $\M_2 $ implies that
this $ \mathfrak{s}$ is a splitting of $ \epsilon_1 :\As(\Dd) \ra \Z$.
Since the image of $\mathfrak{s} $ is contained in the central kernel of $ \As(\Dd) \ra \M_2$,
the semi-direct product arising from $\epsilon_{\rm 1} $ is trivial, i.e., 
\begin{equation}\label{dsds} \As (\Dd) \cong \Z \times \Ker(\epsilon_{\rm 1}).
\end{equation}
Moreover, the inclusion $\As(\DDd) \subset \As(\Dd)$ leads to $ \As (\DDd) \cong \Ker(\epsilon_{\rm 1}) $ in a similar way to the case $g \geq 3$.

Next, recall $ H_1( \As(\mathcal{D}_2^{\rm ns})) \cong \Z $
mentioned above, and the basic facts
$H_1(\M_2) \cong \Z/10 $ and $H_2(\M_2) \cong \Z/2 $; see \cite{FM,Kor}.
Then the infraction-restriction
exact sequence for $\mathcal{E} :  \As(\mathcal{D}_2^{\rm ns})\ra \M_2$ can be written as
\begin{equation}\label{OO}H_2(\As(\mathcal{D}_2^{\rm ns})) \lra \Z/2 \stackrel{\delta^* }{\lra} \Ker (\mathcal{E} ) \lra \Z \lra \Z/10 \lra 0, \ \ \ \ {\rm (exact)}.
\end{equation}
Therefore, $\Ker (\mathcal{E} ) $ is either $\Z$ or $\Z \oplus \Z/2$.
Although the image of $ \delta^*$ is mysterious, the quotient $\As(\mathcal{D}_2^{\rm ns})/ \mathrm{Im}(\delta^*)$ is isomorphic to $ \mathcal{T}_2.$

\begin{proof}[Proof of Theorem \ref{thm2}]
Denote the group presented in the statement by $\mathcal{G}$. 
Then, the quotient of $ \mathcal{G}$ subject to $ (c_1 c_2 c_3)^4 c_5^{-2} = 1$ is $\mathcal{M}_2$.
Further, by \eqref{rel}, the quotient map $\mathcal{G} \ra \M_2$ is a central $\Z$-extension.
A diagram chasing of \eqref{OO} easily reveals that
$\kappa_{3\textrm{-}\mathrm{chain }}$ is equal to $e_{c_5}e_{c_4} e_{c_3} e_{c_2} e_{\gamma_1} e_{\gamma_1}e_{\gamma_2} e_{\gamma_3} e_{\gamma_4} e_{\gamma_5} $ contained in the center $\Ker( \mathcal{E})/\mathrm{Im} \delta^* $.
Moreover, we should notice that $A:= [e_{\tau_{\gamma_5}}e_{\tau_{\gamma_4}}e_{\tau_{\gamma_3}}e_{\tau_{\gamma_2}} e_{\tau_{\gamma_1}} e_{\tau_{\gamma_1}}e_{\tau_{\gamma_2}}e_{\tau_{\gamma_3}}e_{\tau_{\gamma_4}} e_{\tau_{\gamma_5}},e_{\tau_{\gamma_5}} $ is zero in $\As(\mathcal{D}_2^{\rm ns})/ \mathrm{Im}(\delta^*)$. Indeed, although $A$ is contained in the center $\Ker( \mathcal{E})/\mathrm{Im} \delta^*  \cong \Z$ by \eqref{rel}, 
$\epsilon_1(A)=0$ and \eqref{OO} imply $ A=0$.
In summary, as before, 
the correspondence $ c_i \mapsto e_{\gamma_i}$ defines an epimorphism between central $\Z$-extensions $ \mathcal{G} $ and $\As(\DDd )$ over $\mathcal{M}_g$, which
is an isomorphism. 
Hence, the above result $ \As(\DDd ) \cong \mathcal{T}_{2} $ immediately leads to the conclusion.
\end{proof}
\begin{rem}\label{thm33} From the discussion, the kernel of $\As(\mathcal{D}_2)/{\mathrm{Im}\delta^* } \ra \mathcal{M}_2$ is generated by $  (e_{c_1 } e_{c_2} )^{6}e_{\tau_{c_{\rm sep}}}^{-1}$ and $ \kappa_{3\textrm{-} {\rm chain}}$.
\end{rem}

\subsection{An application to Lefschetz fibrations}\label{s4}
Finally, we conclude this paper by discussing
right $ \mathcal{T}_{g,0}$-modules and tuples $(z_1, \dots, z_m) \in (\D)^m$ with $z_1 \cdots z_m =1_G$.
Recall that the product $e_{z_1} \cdots e_{z_m} \in \As(\D ) $ lies in the center $\Ker(\mathcal{E})$.
Furthermore, in the study of Lefschetz fibration invariants,
it is important to verify whether the identity $e_{z_1} \cdots e_{z_m}= \mathrm{id}_M$ holds or not (see \cite[\S 3.2]{Nos2} for the details).

To do so in an easy way,
we will show that
the identity can be established by the central elements $ \kappa_{3\textrm{-}\mathrm{chain }} $, $\kappa_{\rm lantern} $ and the signature of 4-dimensional Lefschetz fibrations:
(This is also a criterion for which the quantum representation is useful for some Lefschetz fibration invariants; see \cite{Nos2} for details. )
Precisely,
\begin{prop}\label{dse}
Let us regard a right $ \mathcal{T}_{g,0}$-module $M$ as 
an $\As(\D )$-module via the isomorphism $\As(\D) \cong \mathcal{T}_{g,0} \times \Z$.
Denote the associated map $\As (\D) \ra \mathrm{End}(M)$ by $\rho $.
\begin{enumerate}[(I)]
\item Let $g \geq 3$. For
any tuple $(z_1, \dots, z_m) \in (\D )^m$ with $ z_1\cdots z_m=1$,
the product $\rho (e_{z_1} \cdots e_{z_m})$ is equal to
\begin{equation}\label{kji} \rho ( \kappa_{3\textrm{-}\mathrm{chain }} )^{ ( \sigma_{\mathbf{z}} + m )/4} \rho ( \kappa_{\rm lantern} )^{( 5 \sigma_{\mathbf{z}} +5 m )/2 -m_{\rm ns } } \rho ( \mathcal{L}_1)^{n_1} \cdots \rho ( \mathcal{L}_{[\frac{g}{2}]})^{n_{[\frac{g}{2}]}}  \in\mathrm{End}(M).
\end{equation}
Here, $n_k:= \# \{ z _i \in \mathcal{D}_g^{(k)}\} $, 
and the notation $ \sigma_{\mathbf{z}} , \ m_{\rm ns} \in \mathbb{N} $ 
are the same as Theorem \ref{ENresult}.

In particular, if the right hand side is the identity, 
then $\rho ( e_{z_1} \cdots e_{z_m}) = \mathrm{id }_M$.
\item If $g=2 $, then the identity
$$ \rho (e_{z_1} \cdots e_{z_m}) =  \rho (( e_{c_1 } e_{c_2} )^{6}e_{\tau{c_{\rm sep}}}^{-1} )^{m_{\rm ns}-m } \rho ( \kappa_{3\textrm{-}\mathrm{chain }} )^{(\sigma_{\mathbf{z}}  -7m +7m_{\rm ns} )/6} \in\mathrm{End}(M)$$
holds for any tuple $ \mathbf{z}= (z_1, \dots, z_m) \in (\D)^m$ satisfying $ z_1\cdots z_m=1 $.
\end{enumerate}
\end{prop}

\begin{proof}
It follows from the proof of Theorem \ref{thm3} that the kernel of $  \As (\D ) \ra \M_g$ is generated by $(\kappa_{3 \textrm{-} \mathrm{chain }}\kappa_{\rm lantern }^{10})$ and the lantern relations $ , \kappa_{\rm lantern } $, $\mathcal{L}_1 , \dots,  \mathcal{L}_{[\frac{g}{2}]} $.
Since $e_{z_1} \cdots e_{z_m} $ is contained in the kernel, 
there exist $N_C, \ N_L \in \Z$ for which the following holds:
\begin{equation}\label{sss1} e_{z_1} \cdots e_{z_m}  = (\kappa_{3 \textrm{-} \mathrm{chain }}\kappa_{\rm lantern }^{10}) ^{N_C} (\kappa_{\rm lantern }) ^{N_L} 
(\mathcal{L}_1)^{n_1} \cdots  ( \mathcal{L}_{[\frac{g}{2}]})^{n_{[\frac{g}{2}]}} 
\in \As ( \D) \end{equation}

First, we show (I).
Note $ m_{\rm ns } =\epsilon_{\rm ns}(e_{z_1} \cdots e_{z_m} ) = - N_L \in \Z $.
Furthermore, recall from Theorem \ref{ENresult}
that the homomorphism $I_g$ satisfies
$I_g ( \kappa_{3\textrm{-}\mathrm{chain }} ) = -6 $ and $I_g( \kappa_{\rm lantern}) = I_g(\mathcal{L}_k ) = -1 $; 
The former statement in Theorem \ref{ENresult} yields 
$$ \sigma_{\mathbf{z}} = I_g(e_{z_1} \cdots e_{z_m} )= -6 N_C + 10 N_C -N_L -n_1-\cdots - n_{[\frac{g}{2}]}  = 4 N_C -m, $$
leading to the solution
$N_C= ( \sigma_{\mathbf{z}} + m )/4 $.
Hence, applying $\rho$ to \eqref{sss1} implies the desired equality \eqref{kji} as claimed.

(II) Finally, we deal with the case $ g =2.$ 
By Remark \ref{thm33}, the kernel of $\As(\mathcal{D}_2) \ra \M_2$ is 
generated by $  (e_{c_1 } e_{c_2} )^{6}e_{\tau{c_{\rm sep}}}^{-1}$ and $ \kappa_{3\textrm{-} {\rm chain}}$.
Hence, we have $ e_{z_1} \cdots e_{z_m}  = ((e_{c_1 } e_{c_2} )^{6}e_{\tau{c_{\rm sep}}}^{-1})^{N_1}(\kappa_{3\textrm{-} {\rm chain}})^{N_2} $. 
Then, $m-m_{\rm ns}= \epsilon_{1}(e_{z_1} \cdots e_{z_m} ) = - N_1 $.
Noting from \cite[Proposition 3.9]{EN} that $ I_g(  (e_{c_1 } e_{c_2} )^{6}e_{\tau{c_{\rm sep}}}^{-1})= -7$, we have $  \sigma_{\mathbf{z}}  = I_g(e_{z_1} \cdots e_{z_m} )=  -7 N_1 - 6 N_2 $
Consequently, the solution $N_2= (\sigma_{\mathbf{z}}  - 7 m + 7m_{\rm ns} )/6$ yields the required equality.
\end{proof} 

\subsection*{Acknowledgments}
The author expresses his gratitude to Sylvain Gervais,
Gregor Masbaum, and Takuya Sakasai for useful comments on the mapping class group.
I also grateful to the referee for carefully reading this paper and giving valuable comments. 

\vskip 1pc

\normalsize
 
DEPARTMENT OF
MATHEMATICS
TOKYO
INSTITUTE OF
TECHNOLOGY
2-12-1
OOKAYAMA
, MEGURO-KU TOKYO
152-8551 JAPAN


\end{document}